\theoremstyle{plain}   
\newtheorem{df}{Definition}[section]
\newtheorem{proposition}[df]{Proposition}
\newtheorem{theorem}[df]{Theorem}
\newtheorem{remark}[df]{Remark}
\newtheorem{definition}[df]{Definition}
\newtheorem{lemma}[df]{Lemma}
\theoremstyle{definition}
\theoremstyle{remark}
\numberwithin{equation}{section}  
\begin{document}

\title{Rate of convergence in first-passage percolation under low moments} 
\author{Michael Damron \\ \small Georgia Tech, Indiana University \and Naoki Kubota \\ \small Nihon University}
\date{}

\maketitle

\begin{abstract}
We consider first-passage percolation on the $d$ dimensional cubic lattice for $d \geq 2$;
that is, we assign independently to each edge $e$ a nonnegative random weight $t_e$
with a common distribution and consider the induced random graph distance (the passage time), $T(x,y)$. It is known that for each $x \in \mathbb{Z}^d$, $\mu(x) = \lim_n T(0,nx)/n$ exists and that $0 \leq \mathbb{E}T(0,x) - \mu(x) \leq C\|x\|_1^{1/2}\log \|x\|_1$ under the condition $\mathbb{E}e^{\alpha t_e}<\infty$ for some $\alpha>0$. By combining tools from concentration of measure with Alexander's methods, we show how such bounds can be extended to $t_e$'s with distributions that have only low moments. For such edge-weights, we obtain an improved bound $C (\|x\|_1 \log \|x\|_1)^{1/2}$ and bounds on the rate of convergence to the limit shape.
\end{abstract}

\section{Introduction}

\subsection{The model}
Let $d \geq 2$.
Denote the set of nearest-neighbor edges of $\mathbb{Z}^d$ by $\mathcal{E}^d$,
and let $(t_e)_{e \in \mathcal{E}^d}$ be a collection of non-negative random variables indexed by $\mathcal{E}^d$. For $x,y \in \mathbb{Z}^d$, define the passage time
\[
\tau(x,y) = \inf_{\gamma : x \to y} \tau(\gamma)\ ,
\]
where $\tau(\gamma) = \sum_{e \in \gamma} t_e$ and $\gamma$ is any lattice path from $x$ to $y$.

We will assume that $\mathbb{P}$, the distribution of $(t_e)$, is a product measure
satisfying the following conditions:
\begin{itemize}
 \item[\bf (A1)]
  $\mathbb{E}Y^2 < \infty$,
  where $Y$ is the minimum of $d$ i.i.d.~copies of $t_e$.
 \item[\bf (A2)]
  $\mathbb{P}(t_e=0) < p_c$,
  where $p_c$ is the threshold for $d$-dimensional bond percolation.
\end{itemize}
We now comment on assumptions (A1) and (A2).
From Lemma~3.1 of \cite{CD},
(A1) guarantees that $\mathbb{E} \tau(0,y)^{4-\eta}<\infty$ for all $\eta>0$ and $y \in \mathbb{Z}^d$.
Conversely, it is sufficient for (A1) that
$\mathbb{E} t_e^{(2+\epsilon)/d}$ is finite for some $\epsilon>0$.
On the other hand, (A2) ensures that
\begin{equation}\label{eq:geodesics}
\mathbb{P}(\exists \text{ a geodesic from } x \text{ to } y) = 1 \text{ for all } x,y \in \mathbb{Z}^d\ ,
\end{equation}
where a geodesic is a path $\gamma$ from $x$ to $y$ that has $\tau(\gamma) = \tau(x,y)$.
Under assumptions (A1) and (A2), equation
(1.13) and Theorem~{1.15} of \cite{Kes86} show that
there exists a norm $\mu (\cdot)$ on $\mathbb{R}^d$, which is called the time constant, such that
for $x \in \mathbb{Z}^d$, $\mathbb{P}$- almost surely,
\begin{align}
 \lim_{n \to \infty}\frac{1}{n}\tau (0,nx)
 = \lim_{n \to \infty}\frac{1}{n}\mathbb{E} \tau (0,nx)
 = \inf_{n \geq 1}\frac{1}{n}\mathbb{E} \tau (0,nx)
 = \mu (x).
 \label{eq:t-c}
\end{align}
If (A1) is replaced by the condition that
the minimum of $2d$ i.i.d.\,copies of $t_e$ has finite $d$-th moment,
then the \emph{shape theorem} holds; that is,
for all $\epsilon>0$, with probability one,
\begin{equation}\label{eq: shape_theorem}
 (1-\epsilon )B_0 \subset \frac{B(t)}{t} \subset (1+\epsilon )B_0 \text{ for all large }t\ ,
\end{equation}
where
\begin{align*}
 B(t):=\biggl\{ x+h ;x \in \mathbb{Z}^d,\,\tau (0,x) \leq t,\,h \in \Bigl[ -\frac{1}{2},\frac{1}{2} \Bigr]^d \biggr\}
\end{align*}
and $B_0:= \{ x \in \mathbb{R}^d;\mu (x) \leq 1 \}$, the \emph{limit shape}.

\subsection{Main results}
Set $T = \tau(0,x)$. Our results below consist of (a) a bound on the deviation of $\mathbb{E}T$ from $\mu$ under (A1) and (A2) and (b) outer bounds on the rate of convergence to the limit shape under these same conditions and inner bounds under stronger conditions. These should be compared to the results of Alexander \cite{Ale97}, who proved the first two with $\log$ in place of $\sqrt{\log}$ under exponential moments for $t_e$.

\begin{proposition}\label{prop:fluct}
Assume (A1) and (A2).
There exists $C_1$ such that for all $x \in \mathbb{Z}^d$ with $\|x\|_1>1$,
\begin{align*}
\mu(x) \leq \mathbb{E} T \leq \mu (x)+C_1(\|x\|_1\log \|x\|_1)^{1/2}.
\end{align*}
\end{proposition}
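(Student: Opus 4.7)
My plan is to dispatch the lower bound immediately from \eqref{eq:t-c} (subadditivity gives $\mu(x)=\inf_n\mathbb{E}\tau(0,nx)/n\leq\mathbb{E}\tau(0,x)$), and to handle the upper bound by combining a lower-tail concentration estimate for $T=\tau(0,x)$ valid under the weak moment hypothesis (A1) with an Alexander-type subadditive-approximation argument \cite{Ale97} converting concentration into a quantitative rate of convergence of $\mathbb{E}T$ to $\mu$.

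For the concentration step, I would start from $\mathbb{E}T^{4-\eta}<\infty$ (Lemma~3.1 of \cite{CD} under (A1)). This is ample input for an entropy-method (Falik--Samorodnitsky / martingale) variance bound $\var(T)\leq C\|x\|_1$, and with a Benjamini--Kalai--Schramm-type refinement for the sharper sublinear bound $\var(T)\leq C\|x\|_1/\log\|x\|_1$. Chebyshev then yields lower-tail control of the form
\[
\mathbb{P}\bigl(T(0,y)-\mathbb{E}T(0,y)<-\lambda\bigr)\lesssim \frac{\|y\|_1}{\lambda^2\log\|y\|_1},
\]
so that fluctuations below the mean at the critical scale $\lambda\asymp\sqrt{\|y\|_1\log\|y\|_1}$ already have small (if only mildly small) probability. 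If necessary I would also use the $L^p$ version with $p=4-\eta$ to upgrade smallness.

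For the Alexander reduction, his scheme takes lower-tail concentration of $T$ at a scale $a(\|y\|_1)$ and produces a bound $\mathbb{E}T(0,x)-\mu(x)\lesssim a(\|x\|_1)$, possibly with an extra logarithmic loss; Alexander's original $\sqrt{\|x\|_1}\log\|x\|_1$ arises from scale $a(\|y\|_1)=\sqrt{\|y\|_1}$ together with a $\log$ loss under exponential moments. Inserting the sublinear-variance critical scale $\sqrt{\|y\|_1\log\|y\|_1}$ from the previous paragraph into Alexander's machinery should yield the target $(\|x\|_1\log\|x\|_1)^{1/2}$. The hard part will be exactly this insertion: Alexander's argument is calibrated for Gaussian concentration, and under only polynomial tails his reduction could a priori cost an additional logarithmic factor. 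Controlling this requires inspecting Alexander's scheme step by step---in particular, the comparison $\mathbb{E}\tau(0,Nx)\approx N\mu(x)$ for moderate $N$, which is itself quantitative in $N$ and must be closed via a bootstrap across scales rather than by circular reasoning---and verifying that the low-moment concentration is sharp enough at each stage, replacing Alexander's exponential-moment inputs by their (A1)-versions without incurring further losses.
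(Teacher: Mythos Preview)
Your lower bound is fine, and you correctly identify that the upper bound should go through Alexander's scheme fed by a lower-tail concentration inequality. The gap is in the concentration step. Alexander's reduction involves a union bound over a skeleton whose cardinality is polynomial in $\|x\|_1$, so the tail probability at the target deviation scale must decay like a negative power of $\|x\|_1$. Chebyshev from a variance bound---even the sublinear one $\var(T)\leq C\|x\|_1/\log\|x\|_1$---gives, at scale $\lambda\asymp\sqrt{\|x\|_1\log\|x\|_1}$, only $\mathbb{P}(T-\mathbb{E}T<-\lambda)\lesssim(\log\|x\|_1)^{-2}$, which is nowhere near summable over a polynomial mesh; passing to $L^p$ with $p<4$ replaces $(\log\|x\|_1)^{-2}$ by $(\log\|x\|_1)^{-p/2}$ and helps not at all. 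With only polynomial-in-$\lambda$ concentration you cannot even recover Alexander's original $\sqrt{\|x\|_1}\,\log\|x\|_1$, let alone the sharper $\sqrt{\|x\|_1\log\|x\|_1}$: the improvement from $\log$ to $\sqrt{\log}$ comes precisely from having \emph{Gaussian} rather than merely exponential tails. (Separately, a BKS-type sublinear variance bound is not known under (A1) alone; the best available results such as \cite{DHS} need $\mathbb{E}t_e^2(\log t_e)_+<\infty$, which is strictly stronger.)

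What the paper does instead is prove a genuine Gaussian lower-tail inequality under (A1)--(A2): $\mathbb{P}(T-\mathbb{E}T\leq-t\sqrt{\|x\|_1})\leq e^{-Ct^2}$ (Theorem~\ref{thm:big_one}). At $t=C'\sqrt{\log\|x\|_1}$ this gives $\|x\|_1^{-CC'^2}$, summable over any polynomial skeleton, and Alexander's argument then runs verbatim to yield $\sqrt{\|x\|_1\log\|x\|_1}$. The Gaussian bound is obtained via the entropy method, and the device that makes it work under the weak moment hypothesis (A1) rather than $\mathbb{E}t_e^2<\infty$ is a block tensorization (Lemma~\ref{lem:ent_box}): one tensorizes entropy over the edge-sets $S_i$ of $3^d$-boxes rather than over single edges. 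The replacement cost $(T_i-T)_+$ is then controlled by passage times \emph{within} a box, and since any two vertices of a box are joined by at least $d$ edge-disjoint paths inside it, the Cox--Durrett argument gives finite second moment of that cost from $\mathbb{E}Y^2<\infty$. This block trick is the missing idea in your proposal.
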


\begin{proof}
This result follows by directly combining the Gaussian concentration inequality we derive below in Theorem~\ref{thm:big_one} with Alexander's method of approximation of subadditive functions \cite{Ale97}. Our final bound is slightly better than the one given by Alexander in \cite{Ale97} because he used only an exponential concentration inequality. The interested reader can see the arXiv version of this paper \cite{DK_arxiv} (version 1).
\end{proof}

\begin{theorem}\label{thm:shape}
Assume (A1) and (A2).
There exists $C_2$ such that with probability one,
\begin{align}
 \frac{B(t)}{t}
 \subset \bigl\{ 1+C_2t^{-1/2} (\log t)^{1/2} \bigr\} B_0 \text{ for all large }t\ .
 \label{eq:outer}
\end{align}
If $\mathbb{E} t_e^\alpha<\infty$ for some $\alpha > 1+1/d$,
then there is $C_3$ such that with probability one,
\begin{align}
 \bigl\{ 1-C_3t^{-1/2} (\log t)^4 \bigr\} B_0 
 \subset \frac{B(t)}{t} \text{ for all large }t\ .
 \label{eq:inner}
\end{align}
\end{theorem}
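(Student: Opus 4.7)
The plan is to combine Proposition~\ref{prop:fluct} with the Gaussian concentration inequality from Theorem~\ref{thm:big_one}, and then to pass from single-point deviation estimates to a global statement about $B(t)$ via a union bound over a polynomial-size grid, Borel--Cantelli along a geometric sequence $t_n = 2^n$, and monotonicity in $t$ to fill in the gaps between consecutive scales.

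For the outer bound (\ref{eq:outer}), the key observation is that since $\mu$ is a norm equivalent to $\|\cdot\|_1$, any lattice point $x$ with $\tau(0,x) \leq t$ satisfies $\|x\|_1 \leq R t$ for some $R$ depending only on $\mu$, so there are only $O(t^d)$ candidate targets. Because $\mu(x) \leq \mathbb{E}\tau(0,x)$, the lower-tail half of Theorem~\ref{thm:big_one} gives
\[
\mathbb{P}\bigl(\tau(0,x) < \mu(x) - \lambda\bigr)
\leq 2\exp\bigl(-c\lambda^2/\|x\|_1\bigr).
\]
Choosing $\lambda = K (t\log t)^{1/2}$ with $cK^2 > d+1$ and union-bounding over the $O(t^d)$ lattice points $x$ with $\|x\|_1 \leq Rt$ makes the failure probability summable along $t_n = 2^n$. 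Borel--Cantelli then delivers, almost surely for all large $n$ and all such $x$: if $\tau(0,x) \leq t_n$, then $\mu(x) \leq t_n + C(t_n\log t_n)^{1/2}$, i.e.\ $x/t_n \in (1 + C t_n^{-1/2}(\log t_n)^{1/2}) B_0$. The thickening by $h \in [-\tfrac12,\tfrac12]^d$ in the definition of $B(t)$ contributes only a lower-order correction, and the inclusion $B(s) \subset B(t)$ for $s \leq t$ bridges from $t_n$ to any $t \in [t_n, 2 t_n]$ at the cost of constants absorbed into $C_2$.

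For the inner bound (\ref{eq:inner}), the structure is parallel: one wants, almost surely for all large $t$ and every $x$ in a lattice mesh filling $tB_0$, that $\tau(0,x) \leq \mu(x) + C t^{1/2}(\log t)^4$, so that $\mu(x) \leq t(1 - C_3 t^{-1/2}(\log t)^4)$ forces $\tau(0,x) \leq t$ and hence $x/t \in B(t)/t$. Proposition~\ref{prop:fluct} again supplies the deterministic part $\mathbb{E}\tau(0,x)-\mu(x)$, while the upper-tail half of Theorem~\ref{thm:big_one} handles the random deviation, and the hypothesis $\mathbb{E}t_e^\alpha < \infty$ with $\alpha > 1+1/d$ is used both to secure the shape theorem (\ref{eq: shape_theorem}) itself and to control the contribution of atypically large single edge weights along near-geodesics of length $O(t)$. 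The main obstacle is exactly this inner direction: lower-tail deviations of $\tau-\mu$ are controlled cleanly by (A1) and Gaussian concentration alone, but the upper tail is sensitive to rare heavy edges, and under only polynomial moments the Gaussian bound is not automatically sharp on that side. Either a refined upper-tail concentration adapted to polynomial tails or a Kesten-type truncation of large weights is needed, and it is this step that pays the extra polylogarithmic factor and yields $(\log t)^4$ in place of $(\log t)^{1/2}$.
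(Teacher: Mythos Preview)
Your outer-bound argument is close to the paper's and would work with one fix. The sentence ``since $\mu$ is a norm equivalent to $\|\cdot\|_1$, any lattice point $x$ with $\tau(0,x)\le t$ satisfies $\|x\|_1\le Rt$'' is false as written: norm equivalence controls $\mu(x)$, not the random variable $\tau(0,x)$, and under (A1)--(A2) there is no deterministic bound $\tau(0,x)\ge c\|x\|_1$. Thus your union bound over only $O(t^d)$ points is not justified. The paper avoids this by not restricting the range of $x$ at all: it applies Theorem~\ref{thm:big_one} at level $\frac{CC_4}{2}(\|x\|_1\log\|x\|_1)^{1/2}$ to get a bound $\|x\|_1^{-C^2C_4^2C_5/4}$, chooses $C$ so the exponent exceeds $d$, and sums over \emph{every} $x\in\mathbb{Z}^d$. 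A short case split ($\mu(y_n)\le 2n$ versus $\mu(y_n)>2n$) shows that any bad point at scale $n$, however far away, exhibits a deviation at least of order $(\|y_n\|_1\log\|y_n\|_1)^{1/2}$, so the same Borel--Cantelli handles both near and far points simultaneously. Your dyadic scheme $t_n=2^n$ is fine, but you must either add a separate estimate for $\|x\|_1>Rt$ (e.g.\ via Kesten's linear lower bound under (A2)) or, more simply, imitate the paper and sum over all lattice points.

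For the inner bound your proposal is not a proof but a description of the obstacle, and it contains an error: Theorem~\ref{thm:big_one} has no ``upper-tail half''. It is a \emph{one-sided} lower-tail Gaussian bound, and under only (A1)--(A2) no matching upper-tail inequality is available (heavy single edges obstruct it). The paper's proof of (\ref{eq:inner}) does not use Theorem~\ref{thm:big_one} at all. Instead it imports two external ingredients: Zhang's polynomial-tail concentration \eqref{eq:zhang} for the \emph{box-to-box} passage time $\tau(D_m(0),D_m(m\xi))$, which is where the $(\log t)^4$ enters, and Ahlberg's large-deviation Lemma~\ref{lem: ahlberg}, which converts the moment hypothesis $\mathbb{E}t_e^\alpha<\infty$, $\alpha>1+1/d$, into $\mathbb{E}Z^{2d+2+\delta}<\infty$ and hence into summable tail bounds for $\tau(0,v)>n$. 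The argument then splits $v\in B_0(n)$ into $\|v\|_1\le C_8n$ (handled by Ahlberg directly) and $\|v\|_1>C_8n$ (handled by sandwiching $\tau(0,v)$ between the box-to-box time and that time plus local maxima $J_m(0)+J_m(m\xi)$, then applying Zhang to the former and Ahlberg to the latter). Neither ingredient, nor the box-to-box detour that makes Zhang applicable, appears in your sketch; ``a Kesten-type truncation'' does not replace them.
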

The proof of Theorem~\ref{thm:shape} will be given in Section~\ref{sec:shape}.

\subsection{Relation to previous works}

The question we address here is to determine the minimal moment condition for $t_e$ that will guarantee that the passage time $\tau(0,x)$ is bounded by $\mu(x) + O(\|x\|_1^a)$ for some $a<1$; that is, that the deviation from the norm $\mu(\cdot)$ is sub-linear by a power of $\|x\|_1$. This line of work began with Cox and Durrett \cite{CD}, who found optimal conditions for existence of a shape theorem. They showed that if $Z$ is the minimum of $2d$ i.i.d.\,random variables distributed as $t_e$, then for each $x \in \mathbb{Z}^d$,
\[
\mathbb{E}Z < \infty \Leftrightarrow \tau (0,nx)/n \text{ converges a.s.}
\]
Furthermore
\[
\mathbb{E}Z^d < \infty \Leftrightarrow \text{the shape theorem }\eqref{eq: shape_theorem} \text{ holds}\ .
\]
These conditions do not address the convergence rate, and it is conceivable that the rate gets worse as the weight distribution gets closer to violating the above conditions.

The behavior $\tau(0,x) = \mu(x) + O(\|x\|_1^a)$ is important because it allows a good approximation of $\tau$ by $\mu$, and this is often useful when one knows information about the asymptotic shape. For example, if $x/\mu(x)$ is an exposed point on the boundary of the limit shape, then one can show that geodesics from $0$ to $nx$ stay within Euclidean distance $o(n)$ of the straight line connecting $0$ and $nx$ for all large $n$. However this is only possible if the error exponent $a<1$. Similar geodesic concentration was needed, for example, in the $\log n$ lower bound for the variance for the passage time in $d=2$ for certain atomic distributions \cite[Proposition~2]{ADdiff}. In that work, exponential moments were assumed for $t_e$ precisely to guarantee that $a<1$ (and this was relaxed in \cite{kubota}).

All work on the convergence rate until recently has assumed exponential moments: $\mathbb{E}e^{\alpha t_e}<\infty$ for some $\alpha>0$. Under this condition and (A2), Alexander \cite{Ale97} showed in 1997, building on work of Kesten \cite{Kes93}, that with probability one, for all large $t$,
\[
(1-Ct^{-1/2}\log t)B_0 \subset B(t)/t \subset (1+Ct^{-1/2}\log t)B_0\ .
\]
The proof of this result combined Kesten's exponential concentration inequality for $\tau(0,x)$, along with Alexander's bound on $\mathbb{E}\tau(0,x) - \mu(x)$ and direction-independent estimates: there exists $C$ such that for all large $x \in \mathbb{Z}^d$,
\begin{equation}\label{eq: nonrandom}
 \mathbb{E}\tau(0,x) -\mu(x) \leq C\| x \|_1^{1/2}\log \| x \|_1.
\end{equation}
The reason these theorems were unattainable under weaker moment conditions is the lack of available concentration inequalities for the passage time $\tau (0,x)$.

Zhang \cite{Zha10} was one of the first to establish forms of concentration and rate of convergence for $\tau(0,nx)/n$ under only existence of $m$ moments for $t_e$. He obtained estimates for central moments for $\tau(0,x)$ and a bound for the left side of \eqref{eq: nonrandom}, but only for $x = ne_1$, a multiple of the first coordinate vector, under the condition $\mathbb{E}t_e^{1+\eta}<\infty$ for some $\eta>0$. The central moment bound was improved by Chatterjee-Dey in \cite[Proposition~5.1]{CDey} by removing logarithmic factors under the same moment assumption. These works do not imply polynomial rates of convergence for the shape theorem because the bound of Zhang on the difference $\mathbb{E}\tau(0,x) - \mu(x)$ is only valid in the coordinate directions (due to use of a reflection argument), and direction-independent estimates are needed.

The first bounds for the left side of \eqref{eq: nonrandom} without an exponential moment assumption are due to Kubota \cite[Theorem~1.2]{kubota},
who showed that if $\mathbb{E}t_e^\alpha<\infty$ for some $\alpha>1$ and (A2) holds, then one has a bound for the left side of \eqref{eq: nonrandom} of $C\| x \|_1^{1-1/(6d+12)}(\log \| x \|_1)^{1/3}$. The present work grew out of attempts to improve this inequality. Our Proposition~\ref{prop:fluct} does so both in the moment assumption and in the rate of convergence. We only require $(2/d)+\epsilon$ moments for $t_e$ (note that this exponent approaches 0 as $d \to \infty$), whereas Kubota needed $1+\epsilon$. Our proofs rely on a concentration inequality which is derived under low moments using a block tensorization procedure (see Lemma~\ref{lem:ent_box}).

From our inequality for \eqref{eq: nonrandom}, we conclude a strong rate of convergence in the shape theorem under low moments. In Theorem~\ref{thm:shape}, we decrease Alexander's outer bound of the shape to
$t^{-1/2}(\log t)^{1/2}$ with only assumptions (A1) and (A2). For the inner bound of the shape, we obtain a rate of $t^{-1/2}(\log t)^4$ under existence of $\alpha$ moments for some $\alpha > 1+1/d$ and (A2). Here the low moment condition of $t_e$ plays a big role, since high edge-weights will prevent certain vertices from entering $B(t)$. The condition on $\alpha$ above comes from optimizing large-deviation bounds from recent work of Ahlberg \cite{Ahl}.

\begin{remark}
After this work was completed, \cite{DHS} obtained sub-diffusive concentration inequalities for $\tau(0,x)$, extending work of Bena\"im-Rossignol \cite{BR} to general distributions. They proved, in particular, if $\mathbb{E}t_e^2(\log t_e)_+<\infty$ for $d \geq 2$, then
\begin{equation}\label{eq: DHS}
\mathbb{P}\left( \tau(0,x) < \mathbb{E}\tau(0,x) - \lambda \sqrt{\frac{\|x\|_1}{\log \|x\|_1}} \right) \leq e^{-c\lambda} \text{ for } \|x\|_1>1,~ \lambda \geq 0\ .
\end{equation}
\eqref{eq: DHS} may be used in Alexander's method to obtain our Proposition~\ref{prop:fluct}, but with a worse moment condition. One may ask if it can be used to prove a sub-gaussian bound $\mathbb{E}\tau(0,x) \leq \mu(x) + o(\sqrt{\|x\|_1})$, as in the case of directed polymers \cite{AZ}, but no such theorem exists in an undirected model. The main complication arises from the extra logarithmic factor coming from Alexander's method, which negates the gain in the scale from \eqref{eq: DHS}.
\end{remark}

\section{Concentration inequality}\label{sec: entropy}

Our first task will be to prove the following lower-tail concentration result.
\begin{theorem}\label{thm:big_one}
Assume (A1) and (A2).
There exists $C_1>0$ such that
\begin{equation}\label{eq: main_result}
\mathbb{P}\left( T - \mathbb{E}T \leq -t \sqrt{\|x\|_1} \right) \leq e^{-C_1t^2} \text{ for } t \geq 0,~ x \in \mathbb{Z}^d\ .
\end{equation}
\end{theorem}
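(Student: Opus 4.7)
The plan is to use the entropy method: by Chernoff, it suffices to prove the sub-Gaussian moment generating function bound $\log \mathbb{E}\bigl[e^{-\lambda(T-\mathbb{E}T)}\bigr] \leq C\lambda^2\|x\|_1$ for $\lambda\geq 0$, which I would obtain from a bound on $\mathrm{Ent}\bigl(e^{-\lambda T}\bigr)$ via Herbst's argument. Previous exponential concentration results for $\tau(0,x)$ required exponential moments, and the new ingredient here is a block version of entropy tensorization (Lemma~\ref{lem:ent_box}) that sidesteps the lack of a local log-Sobolev constant for the heavy-tailed distribution of an individual edge weight. Since the theorem concerns only the lower tail of $T$, and $T$ is nondecreasing in each $t_e$, the argument can use a \emph{one-sided} modified log-Sobolev inequality, which is more forgiving than its two-sided counterpart.

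First I would partition $\mathcal{E}^d$ into disjoint cubes $B_i$ of some large but fixed side length $L$ and apply Lemma~\ref{lem:ent_box} to get
\[
\mathrm{Ent}\bigl(e^{-\lambda T}\bigr) \leq \sum_i \mathbb{E}\bigl[\mathrm{Ent}_{B_i}\bigl(e^{-\lambda T}\bigr)\bigr],
\]
where $\mathrm{Ent}_{B_i}$ is the entropy under the conditional law of $(t_e)_{e\in B_i}$ given the remaining weights. The virtue of blocking over a naive edge-by-edge tensorization is that one resamples an entire block's worth of weights at once, so the local Lipschitz constant of $T$ is measured by the cost of a \emph{detour around} the block rather than by the raw (heavy-tailed) value of any single weight.

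Next, inside each block I would apply a one-sided modified log-Sobolev inequality of Boucheron--Lugosi--Massart type, bounding the local entropy by an $L^2$ norm of the positive part $(T-T^{(i)})_+$, where $T^{(i)}$ is the passage time after an independent resampling of the weights in $B_i$. Conditionally on the configuration outside $B_i$, this drop is dominated by the weight of a bypass around $B_i$. Assumption (A2) together with a Peierls-type argument furnishes an open detour of graph-length $O(L)$ close to $\partial B_i$, while the cost of this detour is controlled by a random variable $Z_i$ built from a bounded number of variables distributed as $Y$, the minimum of $d$ i.i.d.\ copies of $t_e$. Assumption (A1) then gives $\mathbb{E}[Z_i^2]\leq C$, yielding
\[
\mathbb{E}\bigl[\mathrm{Ent}_{B_i}\bigl(e^{-\lambda T}\bigr)\bigr] \leq C\lambda^2\,\mathbb{E}\bigl[Z_i^2\,\mathbf{1}\{\gamma\cap B_i \neq \emptyset\}\,e^{-\lambda T}\bigr],
\]
with $\gamma$ a geodesic from $0$ to $x$.

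Finally, I would bound the number of blocks intersected by $\gamma$ deterministically by $O(\|x\|_1/L)$, using the positive lower quantile on edge weights that (A2) provides. Summing the local estimates and invoking Herbst's argument delivers the desired sub-Gaussian MGF bound, whence the tail estimate. The main obstacle is the local step: without exponential moments for $t_e$, one must carefully design both the resampling coupling and the bypass so that the block Lipschitz constant has only the two finite moments guaranteed by (A1), and then check that this is compatible with a one-sided modified log-Sobolev inequality on an unbounded, heavy-tailed product measure over the block.
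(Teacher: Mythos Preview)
Your high-level plan (entropy method, block tensorization, one-sided BLM, Herbst) matches the paper's, but two steps do not go through as written. First, for the lower tail ($\lambda\le 0$) the BLM inequality (Lemma~\ref{lem:yeah}) bounds the block entropy by $\lambda^2\,\mathbb{E}'_{S_i}\bigl[e^{\lambda T}(T_i-T)_+^2\bigr]$, i.e.\ by the \emph{increase} upon resampling, not the drop $(T-T^{(i)})_+$ you wrote. The paper then controls $(T_i-T)_+$ by rerouting the original geodesic \emph{through} the box using the \emph{resampled} weights, obtaining quantities $T_i(u,v)$ that are independent of the original configuration; their second moments are finite by (A1) because the box contains $d$ edge-disjoint paths between any pair of neighbours, and they therefore factor out of the expectation. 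Your bypass \emph{around} $B_i$ lives outside the block and hence in the original configuration; the resulting $Z_i$ is not independent of $T$, and the displayed inequality $\mathbb{E}\bigl[Z_i^2\,\mathbf{1}\{\gamma\cap B_i\ne\emptyset\}\,e^{-\lambda T}\bigr]\le C\,\mathbb{E}\bigl[\mathbf{1}\{\cdot\}\,e^{-\lambda T}\bigr]$ does not follow.

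Second, and this is the real gap: the number of blocks met by a geodesic is \emph{not} deterministically $O(\|x\|_1/L)$. Assumption~(A2) says only $\mathbb{P}(t_e=0)<p_c$; it gives no pointwise lower bound on $t_e$, and geodesics can be arbitrarily long. The paper does not attempt such a bound. After the block step it reaches $Ent~e^{\lambda T}\le C\lambda^2\,\mathbb{E}\bigl[e^{\lambda T}\,\#\mathrm{Piv}(0,x)\bigr]$ (Proposition~\ref{prop: entropy_bound}) and then controls $\mathbb{E}\bigl[e^{\lambda T}\,\#\mathrm{Piv}\bigr]$ by splitting on $\{\#\mathrm{Piv}\le cT\}$: on this event the Chebyshev association inequality gives $\mathbb{E}[e^{\lambda T}T]\le\mathbb{E}e^{\lambda T}\cdot\mathbb{E}T\le M\|x\|_1\,\mathbb{E}e^{\lambda T}$ (since $T$ is increasing and $e^{\lambda T}$ decreasing); on the complement, Kesten's lemma yields uniform exponential moments for $\phi_x=\#\mathrm{Piv}\,\mathbf{1}_{\{\#\mathrm{Piv}>cT\}}$ (Proposition~\ref{prop: kesten_bound}), and the variational formula~\eqref{eq: exp_holder} feeds a small multiple of $Ent~e^{\lambda T}$ back into the estimate, to be absorbed for small $|\lambda|$. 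This bootstrap---not a deterministic geodesic-length bound, and not a Peierls argument---is the idea your outline is missing.
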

This theorem extends Talagrand's \cite[Theorem~8.2.3]{Talagrand} concentration inequality, which states that \eqref{eq: main_result} holds under assumptions (A2) and $\mathbb{E}t_e^2<\infty$. To derive Theorem~\ref{thm:big_one},
we use the entropy method of Bucheron-Lugosi-Massart (see \cite{BLM} for an introduction). 

\begin{definition}
If $X$ is a non-negative random variable with $\mathbb{E}X<\infty$ then the entropy of $X$ is defined as $Ent~X = \mathbb{E}X\log X - \mathbb{E}X \log \mathbb{E}X$.
\end{definition}

Theorem~\ref{thm:big_one} is a straightforward consequence of the following bound using the Herbst argument (see \cite[p. 122]{BLM}).
\begin{theorem}\label{thm: main_thm}
Assume (A1) and (A2). There exist $C_2,C_3>0$ such that
\[
Ent~e^{\lambda T} \leq C_2\|x\|_1 \lambda^2 \mathbb{E}e^{\lambda T} \text{ for } -C_3\leq \lambda \leq 0, ~ x \in \mathbb{Z}^d\ .
\]
\end{theorem}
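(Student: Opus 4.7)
The strategy is to apply the entropy method of Boucheron--Lugosi--Massart, but with a \emph{block} tensorization of entropy rather than the standard edge-by-edge tensorization. The latter fails under only assumption (A1) because resampling a single edge weight $t_e$ can change $T$ by as much as $t_e$, and $t_e$ has no second moment in general. Grouping edges into boxes repairs this: after resampling a whole box, the change in $T$ can be routed through the box boundary, which introduces a \emph{minimum} of several i.i.d.\ edge weights, and this minimum has finite second moment by (A1).

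Concretely, I would first partition $\mathcal{E}^d$ into the edge sets of disjoint boxes $\{B_i\}$ of some fixed finite side length $L$, chosen so that every box has at least $d$ edges meeting each face. Writing $\mathbb{P} = \bigotimes_i \mathbb{P}_{B_i}$ and $T_i$ for $T$ with the edges in $B_i$ independently resampled, the block tensorization (Lemma~\ref{lem:ent_box}) gives
\[
\mathrm{Ent}\, e^{\lambda T} \;\le\; \sum_i \mathbb{E}\bigl[\mathrm{Ent}_{B_i}\, e^{\lambda T}\bigr].
\]
Next I would estimate each conditional entropy $\mathrm{Ent}_{B_i}\, e^{\lambda T}$ by a symmetrization / dual-variational inequality of the Boucheron--Lugosi--Massart type, upper bounding it (for $\lambda$ in a small neighborhood of $0$) by a constant times $\lambda^2\, \mathbb{E}_{B_i}[(T-T_i)_+^2\, e^{\lambda T}]$, where only the positive part $(T-T_i)_+$ appears because $\lambda \le 0$ means increases upon resampling are what reduce the weight $e^{\lambda T}$. (A Taylor expansion of the BLM $\phi$-function suffices if $|\lambda|$ stays bounded away from the edge of the admissible range.)

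The geometric core is the estimate for $(T - T_i)_+$: if no geodesic for $T$ uses an edge of $B_i$, then $T = T_i$ and the term vanishes. Otherwise, any such geodesic can be shortcut around $B_i$ by following a detour through the boundary of $B_i$ built out of the \emph{new} edge-weights, and the added cost is dominated by a sum of contributions each having the form of the minimum of $d$ i.i.d.\ copies of $t_e$ on the faces of $B_i$. By assumption (A1), each such contribution has finite second moment, uniformly in $i$. Thus $\mathbb{E}_{B_i}[(T-T_i)_+^2] \le K \cdot \mathbf{1}\{B_i \text{ is crossed by a geodesic for } T\}$ for some constant $K$ depending only on $d$ and the common law. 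Summing, the indicators contribute at most the number of boxes crossed by a geodesic from $0$ to $x$, which is $O(\|x\|_1 / L^d) \cdot L = O(\|x\|_1)$, giving the claimed bound $C_2 \|x\|_1 \lambda^2 \mathbb{E}\, e^{\lambda T}$.

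The main obstacle is the boundary-rerouting estimate in step three: I need a bound on $(T-T_i)_+$ that (a) depends only on edges whose law lives in the resampled box $B_i$ (so that $\mathbb{E}_{B_i}[\cdot]$ can be evaluated), (b) has a uniform finite second moment under (A1) alone, and (c) vanishes off the set of boxes crossed by a geodesic so that the total count is $O(\|x\|_1)$ and not $O(\|x\|_1^d)$. This is precisely where the minimum-of-$d$-copies structure --- and hence the precise form of (A1) --- is tailored to fit; the restriction $\lambda \in [-C_3, 0]$ on the admissible range is then forced by the Taylor remainder in step two, since exponential moments of $(T - T_i)_+$ are not available under (A1).
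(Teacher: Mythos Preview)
Your overall architecture matches the paper's: block tensorization of entropy (the paper's Lemma~\ref{lem:ent_box}), a one-sided BLM estimate for each block (the paper's Lemma~\ref{lem:yeah}), and a rerouting bound on the increment whose second moment is finite by (A1). Two small remarks: for $\lambda\le 0$ the relevant increment is $(T_i-T)_+$, not $(T-T_i)_+$; and the paper's boxes $S_i$ are indexed by \emph{edges} and overlap (it first bounds $Ent_{e_i}\le Ent_{S_i}$ by Jensen, then sums), rather than forming a partition. Neither of these is fatal.

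The genuine gap is your last sentence before the ``main obstacle'' paragraph: the claim that the number of geodesic-crossed boxes is $O(\|x\|_1)$ is not deterministic, and your count $O(\|x\|_1/L^d)\cdot L$ is incorrect. A geodesic from $0$ to $x$ can have far more than $\|x\|_1$ edges; only its \emph{passage time} is controlled, not its combinatorial length. So after your rerouting step you are left with $C\lambda^2\,\mathbb{E}\bigl[e^{\lambda T}\cdot N\bigr]$ where $N$ (in the paper, $\#\mathrm{Piv}(0,x)$) is random and correlated with $e^{\lambda T}$. Closing the argument requires three further ingredients you omit: (i) split $N\le cT + N\mathbf{1}_{\{cT<N\}}$; (ii) for the first piece, decouple using the Chebyshev association inequality ($T$ increasing, $e^{\lambda T}$ decreasing) to get $c\,\mathbb{E}T\cdot\mathbb{E}e^{\lambda T}\le cM\|x\|_1\,\mathbb{E}e^{\lambda T}$; (iii) for the second piece, invoke Kesten's bound under (A2) that self-avoiding paths with $\#\gamma\ge m$ and $\tau(\gamma)<am$ are exponentially rare, giving $\mathbb{E}e^{\alpha\phi_x}\le C$ uniformly in $x$, and then use the variational formula \eqref{eq: exp_holder} to bound $\mathbb{E}[e^{\lambda T}\phi_x]\le\alpha^{-1}\bigl(Ent\,e^{\lambda T}+C\,\mathbb{E}e^{\lambda T}\bigr)$. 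This feeds $Ent\,e^{\lambda T}$ back into its own upper bound with coefficient $C\lambda^2/\alpha<1$ for $|\lambda|$ small, which is precisely the source of the restriction $-C_3\le\lambda$ and also where (A2) is used --- a hypothesis your sketch never calls on.
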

Theorem~\ref{thm: main_thm} will be proved in Section~\ref{sec: main_proof}.

\subsection{Properties of entropy}

We begin with some basic results on entropy. This material is taken from \cite[Section~2]{DHS1}, though it appears in various places, including \cite{BLM}. There is a variational characterization of entropy \cite[Section~5.2]{ledoux} that we will use.
\begin{proposition}\label{prop: variation}
We have the formula $Ent~X = \sup \bigl\{ \mathbb{E}XZ : \mathbb{E}e^Z \leq 1 \bigr\}$.
\end{proposition}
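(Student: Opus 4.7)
The plan is to establish the two inequalities separately by exploiting the convex duality between $\phi(x)=x\log x$ and its Legendre transform $\phi^*(y)=e^{y-1}$, which yields the pointwise Young-type inequality
\[
xy \leq x \log x - x + e^y \quad \text{for all } x \geq 0,\, y \in \mathbb{R},
\]
with equality iff $y=\log x$ (and the $x=0$ case handled by the convention $0\log 0=0$). This one-line inequality already encodes the whole proposition; the work is just applying it with the right normalization.

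For the upper bound, I would set $\mu = \mathbb{E}X$ (which we may assume is strictly positive, since otherwise the claim is trivial). Given any random variable $Z$ with $\mathbb{E}e^Z \leq 1$, apply Young's inequality with $x = X/\mu$ and $y = Z$ and then multiply through by $\mu$ to obtain $XZ \leq X\log(X/\mu) - X + \mu e^Z$. Taking expectations yields
\[
\mathbb{E}XZ \leq \mathbb{E}X \log X - \mathbb{E}X \log \mu - \mu + \mu\,\mathbb{E}e^Z = \mathrm{Ent}\,X + \mu(\mathbb{E}e^Z - 1) \leq \mathrm{Ent}\,X,
\]
where the last step uses the hypothesis $\mathbb{E}e^Z \leq 1$. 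This shows $\sup \{\mathbb{E}XZ : \mathbb{E}e^Z \leq 1\} \leq \mathrm{Ent}\,X$.

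For the matching lower bound, I would exhibit a maximizer by choosing $Z^\star = \log(X/\mu)$, which is precisely the point at which Young's inequality is tight. Then $\mathbb{E}e^{Z^\star} = \mathbb{E}(X/\mu) = 1$ and $\mathbb{E}XZ^\star = \mathbb{E}X\log(X/\mu) = \mathrm{Ent}\,X$, realizing the supremum.

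The only subtlety, and the step I would treat carefully, is that $Z^\star = -\infty$ on $\{X = 0\}$. To stay inside the class of real-valued $Z$, I would approximate: take $Z^\star_n = \log((X \vee 1/n)/\mu) - c_n$ where $c_n = \log \mathbb{E}[(X \vee 1/n)/\mu] \to 0$, so that $\mathbb{E}e^{Z^\star_n} = 1$, and then verify by monotone/dominated convergence (using $\mathbb{E}X < \infty$ and $\mathbb{E}X\log X \in (-\infty,\infty]$) that $\mathbb{E}XZ^\star_n \to \mathrm{Ent}\,X$. This truncation step is the only genuinely technical point; the rest is a direct consequence of Young's inequality.
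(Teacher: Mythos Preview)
Your argument is correct and is essentially the standard proof via the Young inequality $xy \le x\log x - x + e^y$; this is exactly the approach in the reference the paper cites (Ledoux, Section~5.2), and also in \cite{BLM}. Note, however, that the paper itself does not give a proof of this proposition at all---it merely states the result and refers to \cite{ledoux}---so there is no ``paper's own proof'' to compare against; what you have written is precisely the kind of proof the cited sources contain.
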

This characterization is typically used in the form: for any random variable $W$,
\begin{equation}\label{eq: exp_holder}
\mathbb{E}XW \leq Ent~X + \mathbb{E}X \log \mathbb{E}e^W.
\end{equation}

The second fact we need is a tensorization for entropy. For an edge $e$, write $Ent_e X$ for the entropy of $X$ considered as a function of only $t_e$ (with all other weights fixed).
\begin{proposition}\label{prop: tensor}
If $X \in L^2$ is a measurable function of $(t_e)$ then
\[
Ent~X \leq \sum_e \mathbb{E} Ent_e~X\ .
\]
\end{proposition}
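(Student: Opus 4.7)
The plan is to invoke the variational characterization from Proposition~\ref{prop: variation} and reduce the inequality to a telescoping decomposition of an arbitrary test function $Z$ into single-edge increments, to each of which the conditional variational formula applies.

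Enumerate the edges as $e_1, e_2, \ldots$ and let $\mathcal{F}_i = \sigma((t_{e_j})_{j>i})$, so that $\mathcal{F}_0$ is the full $\sigma$-algebra and $\mathcal{F}_i$ shrinks to the trivial one by independence of the $t_e$'s. For any measurable $Z$ with $\mathbb{E}e^Z \leq 1$, set
\begin{align*}
Z_i = \log \mathbb{E}[e^Z \mid \mathcal{F}_{i-1}] - \log \mathbb{E}[e^Z \mid \mathcal{F}_i].
\end{align*}
The partial sums telescope to $Z - \log \mathbb{E}[e^Z \mid \mathcal{F}_N]$, which converges almost surely to $Z - \log \mathbb{E}e^Z \geq Z$ by L\'evy's downward martingale theorem. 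The tower property $\mathbb{E}_{e_i} \mathbb{E}[e^Z \mid \mathcal{F}_{i-1}] = \mathbb{E}[e^Z \mid \mathcal{F}_i]$ gives $\mathbb{E}_{e_i} e^{Z_i} = 1$; with all edges other than $t_{e_i}$ held fixed, $Z_i$ is then a legitimate test function for $Ent_{e_i}~X$, so the conditional form of Proposition~\ref{prop: variation} yields $\mathbb{E}_{e_i}[X Z_i] \leq Ent_{e_i}~X$ pointwise, and hence $\mathbb{E}[X Z_i] \leq \mathbb{E}[Ent_{e_i}~X]$.

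Summing and using the telescoping identity gives
\begin{align*}
\mathbb{E}[X Z] \leq \mathbb{E}\Bigl[ X \sum_i Z_i \Bigr] = \sum_i \mathbb{E}[X Z_i] \leq \sum_i \mathbb{E}[Ent_{e_i}~X],
\end{align*}
after which taking the supremum over admissible $Z$ and invoking Proposition~\ref{prop: variation} once more delivers the claim. The one delicate step is the interchange of limit and expectation in the middle equality, since $X Z_i$ is not obviously absolutely summable in $(i,\omega)$. This is handled by a standard truncation: first reduce to bounded non-negative $X$ via monotone convergence of $Ent~(X \wedge k) \uparrow Ent~X$, and to $Z$ bounded below by replacing $Z$ with $Z \vee (-k)$, so that the partial sums are uniformly dominated and dominated convergence gives the identity before letting the truncations tend to infinity.
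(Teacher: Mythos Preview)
The paper does not prove this proposition; it is stated as a known fact, with the surrounding text pointing to \cite[Section~2]{DHS1} and \cite{BLM} (and implicitly \cite{ledoux}) for background. Your argument via the variational characterization of Proposition~\ref{prop: variation} and the telescoping decomposition $Z_i = \log \mathbb{E}[e^Z \mid \mathcal{F}_{i-1}] - \log \mathbb{E}[e^Z \mid \mathcal{F}_i]$ is exactly the standard proof found in those references (see in particular \cite[Section~5.2]{ledoux}), so in that sense you are reproducing the proof the paper is citing.

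One small correction in your truncation step: $Ent(X \wedge k)$ need not be monotone in $k$, so ``monotone convergence'' is not quite the right phrase. What you actually use is that $(X\wedge k)\log(X\wedge k)$ is dominated by $C(1+X^2)\in L^1$ (since $X\in L^2$), so $Ent(X\wedge k)\to Ent\,X$ by dominated convergence; the same domination controls the right-hand side term by term. With that adjustment, and noting that for finite $N$ one has the clean identity $\sum_{i\le N}Z_i = Z - \log \mathbb{E}[e^Z\mid \mathcal{F}_N]$ (so the interchange issue reduces to passing to the limit in $\mathbb{E}[X\log \mathbb{E}[e^Z\mid \mathcal{F}_N]]$, which your truncations handle), the proof is complete.
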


\subsection{Application of Boucheron-Lugosi-Massart}
We say that $e \in \mathcal{E}^d$ is pivotal for $T$ if $e$ is in the intersection of all geodesics of $T$.
A pivotal edge has the property that increasing its weight, while leaving all other weights unchanged,
can change $T$ (and no other edges have this property).
Let $\mathrm{Piv}(0,x)$ be the set of all pivotal edges for $T$.
Our first bound toward Theorem~\ref{thm: main_thm} is the next proposition.
\begin{proposition}\label{prop: entropy_bound}
There exists a constant $C_4$ such that
\[
Ent~e^{\lambda T} \leq C_4 \lambda^2 \mathbb{E}\left[ e^{\lambda T} \# \mathrm{Piv}(0,x) \right]
\text{ for } \lambda \leq 0.
\]
\end{proposition}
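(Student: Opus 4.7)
The plan is to apply the Boucheron--Lugosi--Massart entropy method: first tensorize the entropy of $e^{\lambda T}$ over the edges, and then bound each edge's contribution using a one-variable modified log-Sobolev-type inequality that exploits the explicit structure of $T$ as a function of a single weight. By the tensorization property (Proposition~\ref{prop: tensor}),
\[
Ent~e^{\lambda T} \leq \sum_{e \in \mathcal{E}^d} \mathbb{E}\bigl[Ent_e~e^{\lambda T}\bigr],
\]
so the task reduces to bounding each inner entropy $Ent_e~e^{\lambda T}$ (with all weights other than $t_e$ held fixed) by something proportional to $\lambda^2 \mathbb{E}_e[e^{\lambda T}\mathbf{1}\{e\in\mathrm{Piv}(0,x)\}]$ in conditional expectation; summing then yields $\#\mathrm{Piv}(0,x)$ via $\sum_e \mathbf{1}\{e\in\mathrm{Piv}(0,x)\} = \#\mathrm{Piv}(0,x)$.

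Second, I would exploit the one-variable structure of $T$ in the weight $t_e$. Setting $T_e^{-} = T|_{t_e = 0}$ and $T_e^{+} = T|_{t_e = +\infty}$ (the passage time in the lattice with $e$ deleted), both measurable with respect to $(t_f)_{f\neq e}$, the fact that $T$ is a minimum of path-sums immediately yields the explicit one-kink formula
\[
T(s) = T_e^{-} + \bigl(s \wedge (T_e^{+}-T_e^{-})\bigr).
\]
Hence $s\mapsto T(s)$ is non-decreasing, $1$-Lipschitz, and concave, and the edge $e$ is pivotal for $T$ at the current weight $t_e$ precisely when $t_e < T_e^{+}-T_e^{-}$. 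In particular, $e^{\lambda T(s)}$ is constant (equal to $e^{\lambda T_e^{+}}$) on the non-pivotal region $\{s \geq T_e^{+}-T_e^{-}\}$, so its entropy is generated entirely by the pivotal region. This is the geometric fact that should produce the indicator $\mathbf{1}\{e\in\mathrm{Piv}\}$.

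Third, I would feed this structure into the variational characterization of entropy (Proposition~\ref{prop: variation}), writing
\[
Ent_e~e^{\lambda T} \leq \mathbb{E}_e\bigl[e^{\lambda T}(\lambda T - \lambda c)\bigr] + e^{\lambda c} - \mathbb{E}_e[e^{\lambda T}]
\]
with the anchor $c = T_e^{+}$; contributions from $\{t_e \geq T_e^{+}-T_e^{-}\}$ then cancel identically, leaving only an integral over the pivotal region, which can be bounded using $|T(s) - T_e^{+}| \leq (T_e^{+}-T_e^{-} - s)_+$ and an elementary quadratic estimate of the integrand for $|\lambda|$ bounded.

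The main obstacle will be this third step. The Efron--Stein-style entropy inequalities (replacement by an independent copy of $t_e$) naturally produce an error of the form $(T_e^{+}-T_e^{-})^2$ weighted by the pivotality indicator, and $T_e^{+}-T_e^{-}$ is not uniformly bounded in the configuration $(t_f)_{f\neq e}$; this extra factor must be absorbed rather than left in the bound. My expectation is that running the argument through the variational formula with the anchor $e^{\lambda T_e^{+}}$ (rather than via replacement-by-independent-copy) is what is needed: the flat part of $T(s)$ cancels exactly, and on the pivotal region the exponential weight $e^{\lambda T}$ tames the unbounded variation of $T$ for $\lambda$ restricted to a bounded range, which is precisely the regime $-C_3 \leq \lambda \leq 0$ appearing in Theorem~\ref{thm: main_thm}.
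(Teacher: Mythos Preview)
Your plan has a genuine gap in the third step, and the anchor trick does not resolve it in the way you expect. Carrying out your computation with $X=e^{\lambda T}$ and anchor $c=e^{\lambda T_e^+}$ gives, after the cancellation on the non-pivotal region,
\[
Ent_e\,e^{\lambda T}\;\le\;\mathbb{E}_e\Bigl[e^{\lambda T}\bigl(u-1+e^{-u}\bigr)\,\mathbf{1}\{e\in\mathrm{Piv}(0,x)\}\Bigr],\qquad u=|\lambda|\,(T_e^+-T).
\]
Since $u-1+e^{-u}\le u^2/2$, the best you obtain is
\[
Ent_e\,e^{\lambda T}\;\le\;\tfrac{\lambda^2}{2}\,\mathbb{E}_e\Bigl[e^{\lambda T}\,(T_e^+-T)^2\,\mathbf{1}\{e\in\mathrm{Piv}(0,x)\}\Bigr].
\]
The factor $(T_e^+-T)^2$ is a function of the \emph{original} weights $(t_f)_{f\ne e}$; it is neither bounded nor independent of $e^{\lambda T}$ and the pivotality indicator, so it cannot be extracted as the deterministic constant $C_4$ that the proposition requires. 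Restricting $\lambda$ to a bounded interval does nothing here: even for fixed $\lambda$, $u$ is unbounded because $T_e^+-T_e^-$ is. The exponential weight $e^{\lambda T}$ actually moves in the wrong direction on the pivotal region (it is larger than $e^{\lambda T_e^+}$), so it does not tame this factor. If instead you use single-edge replacement by an independent copy $t_e'$, the overshoot is bounded by $t_e'$, which does factor out---but then you need $\mathbb{E}t_e^2<\infty$, strictly stronger than (A1).

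The paper's device is precisely designed to get around this. Instead of tensorizing over single edges, it tensorizes over small boxes $S_i$ (Lemma~\ref{lem:ent_box}), and then applies the replacement-by-independent-copy inequality (Lemma~\ref{lem:yeah}) at the level of each box. The point is that the overshoot $(T_i-T)_+$ is bounded by a sum of \emph{local box passage times} $T_i(u,v)$ computed entirely in the independent-copy weights on $S_i$. These are therefore independent of $e^{\lambda T}$ and of the pivotality event in the original configuration, so $\mathbb{E}\bigl[(\sum_{\{u,v\}\in S_i}T_i(u,v))^2\bigr]$ factors out as an honest constant. Finiteness of that constant under (A1) alone is exactly what the box buys: inside $S_i$ there are at least $d$ edge-disjoint paths between any pair of neighbours, so the Cox--Durrett argument gives $\mathbb{E}T_i(u,v)^2<\infty$ from $\mathbb{E}Y^2<\infty$. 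Your single-edge scheme cannot reproduce this decoupling, which is the missing idea.
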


Given a nonempty subset $S$ of $\mathcal{E}^d$, denote by $t_S$ a configuration $(t_e)_{e \in S}$
restricted to the edges in $S$.
Let $\{e_1, e_2, \ldots\}$ be an enumeration of $\mathcal{E}^d$.
Then, for each $e_i$, deterministically choose one of its endpoints $x_i$
and consider the box $B_i:=\{ v \in \mathbb{Z}^d;\|v-x_i\|_\infty \leq 1 \}$.
In addition, let $S_i$ be the edges between nearest-neighbor points of $B_i$:
\[
S_i :=
\bigl\{\{u,v\} \in \mathcal{E}^d : \|u-x_i\|_\infty \leq 1 \text{ and } \|v-x_i\|_\infty \leq 1
\bigr\}.
\]
Furthermore, set $f(x) := x\log x$ and
note that $f$ is convex on $[0,\infty)$ (with $f(0)=0$).

\begin{lemma}\label{lem:ent_box}
We have
\begin{align}\label{eq:ent_box}
 Ent~e^{\lambda T} \leq \sum_{i=1}^\infty \mathbb{E} Ent_{S_i}~e^{\lambda T},
\end{align}
where $Ent_{S_i}~e^{\lambda T}$ is the entropy of $e^{\lambda T}$
considered as a function of only $t_{S_i}$.
\end{lemma}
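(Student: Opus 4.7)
The plan is to combine the standard per-edge tensorization of entropy with a Jensen-type monotonicity that enlarges each single-edge contribution to the corresponding block contribution. Since $\{e_1,e_2,\ldots\}$ enumerates $\mathcal{E}^d$, Proposition~\ref{prop: tensor} applied directly to $X=e^{\lambda T}$ gives
\[
Ent~e^{\lambda T} \leq \sum_{i=1}^\infty \mathbb{E}~Ent_{e_i}~e^{\lambda T},
\]
so it suffices to prove the termwise inequality $\mathbb{E}~Ent_{e_i}~e^{\lambda T} \leq \mathbb{E}~Ent_{S_i}~e^{\lambda T}$ for every $i$. This relies on $e_i \in S_i$, which holds because both endpoints of $e_i$ lie within $\ell^\infty$-distance one of the chosen endpoint $x_i$, and hence $e_i$ is an edge of $B_i$.

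For the termwise bound, fix $i$, write $X = e^{\lambda T}$, and set $Y_1 = \mathbb{E}[X \mid t_{e'} : e' \neq e_i]$ and $Y_2 = \mathbb{E}[X \mid t_{e'} : e' \notin S_i]$. Because $e_i \in S_i$, the sigma-algebra generated by $\{t_{e'}: e' \notin S_i\}$ sits inside the one generated by $\{t_{e'}: e' \neq e_i\}$, so the tower property gives $Y_2 = \mathbb{E}[Y_1 \mid t_{e'} : e' \notin S_i]$. Unwinding the definitions of $Ent_{e_i}$ and $Ent_{S_i}$ and taking outer expectations,
\[
\mathbb{E}~Ent_{e_i}~X = \mathbb{E}[X\log X] - \mathbb{E}[f(Y_1)], \qquad \mathbb{E}~Ent_{S_i}~X = \mathbb{E}[X\log X] - \mathbb{E}[f(Y_2)],
\]
with $f(x)=x\log x$. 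Convexity of $f$ on $[0,\infty)$ together with conditional Jensen gives $f(Y_2) \leq \mathbb{E}[f(Y_1) \mid t_{e'}:e'\notin S_i]$ pointwise; integrating yields $\mathbb{E}[f(Y_2)] \leq \mathbb{E}[f(Y_1)]$, which rearranges to the claimed termwise bound. Summing over $i$ then delivers \eqref{eq:ent_box}.

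The only thing to be careful about is the direction of the Jensen step: enlarging the set of variables being averaged over in the inner integral corresponds to conditioning on a \emph{smaller} outer sigma-algebra, which produces a \emph{larger} expected entropy; this is precisely why the block entropy dominates the single-edge entropy in expectation. Apart from this bookkeeping, the argument needs nothing beyond Proposition~\ref{prop: tensor} and the convexity of $f(x)=x\log x$ flagged in the setup; the specific geometry of $B_i$ and the choice of $x_i$ are immaterial here and only become relevant later, when one estimates each $Ent_{S_i}~e^{\lambda T}$ in terms of a variance-like quantity on the block.
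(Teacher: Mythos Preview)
Your proof is correct and follows essentially the same approach as the paper: both apply Proposition~\ref{prop: tensor} to reduce to the termwise inequality $\mathbb{E}\,Ent_{e_i}\,e^{\lambda T}\le \mathbb{E}\,Ent_{S_i}\,e^{\lambda T}$, and then use Jensen's inequality for the convex function $f(x)=x\log x$ to compare the two after-averaging terms. The only difference is notational: the paper writes the Jensen step with explicit product integrals $\mathbb{P}(\mathrm{d}t_{e_i})$, $\mathbb{P}(\mathrm{d}t_{S_i})$, while you phrase it via conditional expectations and the tower property, which is the same computation.
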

\begin{proof}

For each $i \geq 1$,
\[
\mathbb{E}Ent_{e_i} e^{\lambda T} = \iint f(e^{\lambda T})~\mathbb{P}(\text{d}t_{e_i})~\mathbb{P}(\text{d}t_{e_i^c}) - \int f\left( \int e^{\lambda T}~\mathbb{P}(\text{d}t_{e_i}) \right)~\mathbb{P}(\text{d}t_{e_i^c})\ .
\]
Here the measure $\mathbb{P}(\text{d}t_{e_i^c})$ represents the joint distribution of the edge-weights outside $e_i$.
Since $f$ is convex, Jensen's inequality shows that this is bounded above by
\[
\iint f(e^{\lambda T})~\mathbb{P}(\text{d}t_{S_i})~\mathbb{P}(\text{d}t_{S_i^c}) - \int f\left( \int e^{\lambda T}~\mathbb{P}(\text{d}t_{S_i}) \right)~\mathbb{P}(\text{d}t_{S_i^c}) = \mathbb{E}Ent_{S_i}~e^{\lambda T}
\]
Therefore Proposition~\ref{prop: tensor} implies that $Ent~e^{\lambda T}
 \leq \sum_{i=1}^\infty \mathbb{E} Ent_{e_i}~e^{\lambda T}
 \leq \sum_{i=1}^\infty \mathbb{E}Ent_{S_i}~e^{\lambda T}$.
\end{proof}

Let $T_i$ be the variable $T$ when the weights in $S_i$
are replaced by independent copies,
and let $\mathbb{E}_{S_i}'$ be an average over the variables in $S_i$ and their independent copies. The following lemma is nearly identical to \cite[Theorem~6.15]{BLM}, so we omit the proof.
\begin{lemma}\label{lem:yeah}
We have for $\lambda \leq 0$ and $i \geq 1$,
\begin{equation}\label{eq: yeah}
Ent_{S_i} e^{\lambda T} \leq \lambda^2 \mathbb{E}'_{S_i} \bigl[ e^{\lambda T}(T_i-T)_+^2 \bigr].
\end{equation}
\end{lemma}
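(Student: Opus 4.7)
The plan is to adapt the proof of the modified logarithmic Sobolev inequality \cite[Theorem~6.15]{BLM} of Boucheron--Lugosi--Massart to the block-resampling setting of $S_i$, exploiting the sign constraint $\lambda \leq 0$ to retain only the $(T_i - T)_+^2$ contribution. Throughout I write $Y = e^{\lambda T}$ and $Y_i = e^{\lambda T_i}$; since $T_i$ is obtained from $T$ by resampling only the finitely many weights in $S_i$, the variables $Y$ and $Y_i$ are conditionally independent and identically distributed given $t_{S_i^c}$ under the measure $\mathbb{E}'_{S_i}$.

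First I would replace the entropy by a symmetric expression. Applying Jensen's inequality $\log \mathbb{E}_{S_i} Y \geq \mathbb{E}_{S_i} \log Y$ and using that $Y$ and $Y_i$ have the same $\mathbb{E}_{S_i}$-distribution, one gets
\[
\mathrm{Ent}_{S_i}\, Y \;\leq\; \mathbb{E}_{S_i}[Y \log Y] - \mathbb{E}_{S_i} Y \cdot \mathbb{E}_{S_i} \log Y_i \;=\; \mathbb{E}'_{S_i}\bigl[Y(\log Y - \log Y_i)\bigr] \;=\; \lambda\, \mathbb{E}'_{S_i}\bigl[Y(T - T_i)\bigr].
\]
A standard symmetrization (exchange the independent copies $t_{S_i}^{(1)}, t_{S_i}^{(2)}$) then rewrites the right-hand side as $\tfrac{\lambda}{2}\, \mathbb{E}'_{S_i}\bigl[(Y - Y_i)(T - T_i)\bigr]$, which is nonnegative since $\lambda \leq 0$ makes $e^{\lambda\,\cdot}$ nonincreasing and hence forces $(Y - Y_i)(T - T_i) \leq 0$ pointwise.

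Next I would estimate the integrand by the mean value theorem $|e^a - e^b| \leq |a - b|\max(e^a, e^b)$, which yields $|Y - Y_i||T - T_i| \leq |\lambda|(T - T_i)^2 \max(Y, Y_i)$. Since $\lambda \leq 0$, the maximum equals $Y$ when $T \leq T_i$ and equals $Y_i$ when $T \geq T_i$, so I can split
\[
|Y - Y_i||T - T_i| \;\leq\; |\lambda|\bigl[(T_i - T)_+^2\, Y + (T - T_i)_+^2\, Y_i\bigr].
\]
A final exchange of $t_{S_i}^{(1)}$ and $t_{S_i}^{(2)}$ shows the two summands have identical $\mathbb{E}'_{S_i}$-expectations, reducing the bound to $2|\lambda|\, \mathbb{E}'_{S_i}\bigl[(T_i - T)_+^2\, Y\bigr]$; combining with the symmetrized bound from the previous step produces \eqref{eq: yeah}.

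The main obstacle is only careful bookkeeping of signs: the asymmetric appearance of $(T_i - T)_+$ rather than $|T - T_i|$ in the final inequality comes precisely from the monotonicity of $e^{\lambda\,\cdot}$ for $\lambda \leq 0$, and this is what will make the lemma useful when iterated over $i$ in the proof of Theorem~\ref{thm: main_thm}. No convergence issues arise because resampling touches only the finitely many edges of the bounded block $S_i$, and importantly no moment assumption on $t_e$ enters at this stage; the moment condition (A1) is exploited only later, when $(T_i - T)_+$ is controlled by the total weight of a finite connector near $S_i$.
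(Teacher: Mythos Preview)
Your argument is correct and is exactly the adaptation of \cite[Theorem~6.15]{BLM} to block resampling that the paper has in mind; indeed the paper omits the proof precisely because it coincides with this computation. The Jensen step, the exchangeability symmetrization, the mean-value bound $|e^a-e^b|\le |a-b|\max(e^a,e^b)$, and the use of $\lambda\le 0$ to identify $\max(Y,Y_i)$ with $Y$ on $\{T\le T_i\}$ are all handled correctly and combine to give the stated inequality with the exact constant $\lambda^2$.
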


Now we are in a position to prove Proposition~\ref{prop: entropy_bound}.

\begin{proof}[\bf Proof of Proposition~\ref{prop: entropy_bound}]
Lemmata~\ref{lem:ent_box} and \ref{lem:yeah} imply that
\begin{align}\label{eq:replace_bound}
 Ent~e^{\lambda T}
 \leq \lambda^2 \sum_{i=1}^\infty
      \mathbb{E} \Bigl[ \mathbb{E}'_{S_i} \bigl[ e^{\lambda T}(T_i-T)_+^2 \bigr] \Bigr].
\end{align}
To estimate the right side,
we first show that
\begin{align}\label{eq:difference}
 (T_i-T)_+ = (T_i-T)_+\mathbf{1}_{\{\mathrm{Piv}(0,x) \cap S_i \neq \emptyset\}}((t_e)).
\end{align}
We show that if $T_i >T$, then $\mathbf{1}_{\{\mathrm{Piv}(0,x) \cap S_i \neq \emptyset\}}((t_e)) =1$. So suppose that $\mathrm{Piv}(0,x)$ does not intersect $S_i$ in $(t_e)$; we must show that $(T_i-T)_+=0$. Under this assumption, there is a geodesic $\gamma$ in $(t_e)$ from $0$ to $x$ such that $\gamma$ does not contain edges in $S_i$, so in the configuration $(t_e')$, which equals $(t_e)$ off $S_i$ and equals the independent weights on $S_i$, the passage time of $\gamma$ is the same as in $(t_e)$.
This means that $T \geq T_i$ and \eqref{eq:difference} follows.

We next show that
\begin{equation}\label{eq: new}
(T_i-T)_+ \leq \sum_{\{u,v\} \in S_i} T_i(u,v),
\end{equation}
where $T_i(u,v)$ is the minimal passage time among all paths from $u$ to $v$
with edges in $S_i$ in the configuration $(t_e')$.
For $\{u,v\} \in S_i$, let $G_i(u,v)$ be a path with edges in $S_i$ from $u$ to $v$ with passage time in $(t_e')$ equal to $T_i(u,v)$. Let $\gamma$ be a (vertex self-avoiding) path from $0$ to $x$ with passage time in $(t_e)$ equal to $T$. Such a path exists by removing loops from a geodesic from $0$ to $x$. Define $\gamma'$ by replacing each of
its edges $\{u,v\} \in S_i$ by the path $G_i(u,v)$.
Then, the passage time of $\gamma'$ in the configuration $(t_e')$ is bounded above by $T+ \sum_{\{u,v\} \in \gamma \cap S_i} T_i(u,v) \leq T+ \sum_{\{u,v\} \in S_i} T_i(u,v)$,
and this shows \eqref{eq: new}.

By \eqref{eq:replace_bound}, \eqref{eq:difference} and \eqref{eq: new},
\begin{align*}
 Ent~e^{\lambda T}
 &\leq \lambda^2 \sum_{i=1}^\infty \mathbb{E} \Biggl[ \mathbb{E}'_{S_i} \Biggl[
       e^{\lambda T} \Biggl( \sum_{\{u,v\} \in S_i} T_i(u,v) \Biggr)^2
       \mathbf{1}_{\{\mathrm{Piv}(0,x) \cap S_i \neq \emptyset\}}((t_e)) \Biggr] \Biggr]\\
 &\leq \lambda^2 \mathbb{E} \Biggl( \sum_{\{u,v\} \in S_1} T_1(u,v) \Biggr)^2
       \mathbb{E} \bigl[ e^{\lambda T} C_4 \# \mathrm{Piv}(0,x) \bigr].
\end{align*}
for some constant $C_4$.
Note that, since $\mathbb{E}Y^2<\infty$ and there are at least $d$ (edge) disjoint paths between $u$ and $v$ with edges in $S_1$, the argument of Cox-Durrett \cite[Lemma~3.1]{CD} implies that
$\mathbb{E}T_1(u,v)^2<\infty$ for all $\{u,v\} \in S_1$.
Hence, Proposition~\ref{prop: entropy_bound} follows.
\end{proof}

\subsection{Proof of Theorem~\ref{thm: main_thm}}\label{sec: main_proof}
To prove Theorem~\ref{thm: main_thm}, we first show that
there exist $C_5$ and $C_6$ such that
for $\lambda \in (-C_5,0)$,
\begin{align}\label{eq:pre_main}
 \mathbb{E}\left[ e^{\lambda T} \#\mathrm{Piv}(0,x) \right] \leq C_6 \|x\|_1 \mathbb{E} e^{\lambda T}.
\end{align}

For the proof of \eqref{eq:pre_main}, for $c>0$ to be determined later,
we divide the left side of \eqref{eq:pre_main} into the following two parts:
\begin{equation}\label{eq: decomposition}
\mathbb{E} \left[ e^{\lambda T}\# \mathrm{Piv}(0,x) \right] \leq c\mathbb{E} \left[ e^{\lambda T}T \right] + \mathbb{E} \left[ e^{\lambda T} \# \mathrm{Piv}(0,x) \mathbf{1}_{\{ cT < \#\mathrm{Piv}(0,x)\}}\right]\ .
\end{equation}
Note that $T$ is an increasing function of $(t_e)$ whereas $e^{\lambda T}$ is decreasing (since $\lambda \leq 0$), so the Chebyshev association inequality \cite[Theorem~2.14]{BLM} gives
\begin{equation}\label{eq: stepone}
 \mathbb{E} \bigl[ e^{\lambda T} T \bigr]
 \leq \mathbb{E}e^{\lambda T} \mathbb{E}T
 \leq M \|x\|_1 \mathbb{E} e^{\lambda T},
\end{equation}
where $M:= \mathbb{E} \tau(0,e_1)$.
Therefore, the first term of \eqref{eq: decomposition} is harmless,
and we are left to deal with the second term.
To this end, we give the following proposition.
\begin{proposition}\label{prop: kesten_bound}
Assume (A2).
We can choose $c>0$ such that for some $\alpha,C_7>0$,
\[
\mathbb{E} e^{\alpha \phi_x} \leq C_7 \text{ for all } x\ ,
\]
where $\phi_x= \#\mathrm{Piv}(0,x) \mathbf{1}_{\{cT < \#\mathrm{Piv}(0,x)\}}$.
\end{proposition}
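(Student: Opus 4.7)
My plan is to reduce the desired exponential moment bound to a classical Kesten-type large-deviation estimate on self-avoiding paths with unusually short passage time, which is where assumption (A2) enters crucially. Specifically, under (A2), there exist constants $a, b, \gamma > 0$ such that for every $n \geq 1$,
\[
\mathbb{P}\bigl(\exists \text{ self-avoiding } \pi \text{ starting at } 0 \text{ with } |\pi| \geq n \text{ and } \tau(\pi) \leq a|\pi|\bigr) \leq b e^{-\gamma n};
\]
this is essentially Proposition~5.8 of \cite{Kes86}. I will then set $c := 1/a$ in the definition of $\phi_x$.

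Given this input, the main step is a short deterministic observation: on the event $\{\phi_x > n\}$, by \eqref{eq:geodesics} a geodesic $\pi$ from $0$ to $x$ exists almost surely, and after removing its loops (each of which has passage time $0$ by optimality of $\pi$) it becomes a self-avoiding geodesic. Since every pivotal edge belongs to every geodesic, no pivotal edge is removed by this trimming, so $|\pi| \geq \#\mathrm{Piv}(0,x) > n$. The defining condition $cT < \#\mathrm{Piv}(0,x)$ then forces
\[
\tau(\pi) = T < \frac{\#\mathrm{Piv}(0,x)}{c} \leq \frac{|\pi|}{c} = a|\pi|,
\]
so a self-avoiding path with the two properties demanded by the Kesten estimate indeed exists.

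Combining these two ingredients gives $\mathbb{P}(\phi_x > n) \leq b e^{-\gamma n}$ uniformly in $x \in \mathbb{Z}^d$. Picking any $\alpha \in (0, \gamma)$ and summing (or integrating) the tail yields the claimed $\mathbb{E} e^{\alpha \phi_x} \leq C_7$ with $C_7$ independent of $x$, proving the proposition. The only real obstacle is locating the right Kesten-type bound valid under just (A2); once invoked, the pivotal-to-geodesic reduction is essentially immediate.
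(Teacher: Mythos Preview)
Your proposal is correct and follows essentially the same route as the paper: invoke Kesten's Proposition~5.8, set $c=1/a$, observe that on $\{\phi_x>n\}$ any self-avoiding geodesic from $0$ to $x$ has at least $\#\mathrm{Piv}(0,x)>n$ edges and passage time $T<a\#\mathrm{Piv}(0,x)\leq a|\pi|$, and conclude the exponential tail and hence the exponential moment. The only cosmetic difference is that you state Kesten's bound directly in the form $\tau(\pi)\leq a|\pi|$, whereas the paper quotes it with $\tau(\gamma)<am$ and then passes through the one-line summation $\mathbb{P}(A'_n)\leq\sum_{m\geq n}\mathbb{P}(A_m)$ to reach the same inequality.
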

\begin{proof}
We will use a result of Kesten \cite[Proposition~5.8]{Kes86}, which says that if $\mathbb{P}(t_e=0)<p_c$ then there exist $a,C_8,C_9>0$ such that for all $m \in \mathbb{N}$,
\begin{align}\label{eq:kesten}
\mathbb{P}\bigl( \exists \text{ self-avoiding } \gamma \text{ starting at }0 \text{ with } \#\gamma \geq m \text{ but } \tau(\gamma) < am \bigr) \leq C_8e^{-C_9m}\ .
\end{align}
Let $A_m$ be the event that there is a self-avoiding path $\gamma$ starting at $0$ with $\#\gamma = m$ but $\tau(\gamma) < a \#\gamma$. $A_m$ implies the event in the left side in \eqref{eq:kesten}, so for all $m \geq 0$, $\mathbb{P}(A_m) \leq C_8e^{-C_9m}$. Denote by $A'_n$ the event that there is a self-avoiding path $\gamma$
starting at $0$ with $\#\gamma \geq n$ but $\tau(\gamma)<a\#\gamma$.
Then, there is a constant $C_{10}$ such that for $n \geq 0$,
\begin{align*}
 \mathbb{P}(A'_n)
 \leq \sum_{m=n}^\infty \mathbb{P}(A_m)
 \leq \sum_{m=n}^\infty C_8e^{-C_9m}
 \leq C_{10}e^{-C_9n},
\end{align*}
which implies that $\mathbb{P}(\phi_x \geq n) \leq \mathbb{P}(\{ \phi_x \geq n \} \cap (A'_n)^c)+C_{10}e^{-C_9n}$. On the event $\{ \phi_x \geq n \} \cap (A'_n)^c$ for $n \geq 1$, we have $cT<\# \mathrm{Piv}(0,x)$ and $\# \mathrm{Piv}(0,x) \geq n$,
so that, letting $\pi$ be a (self-avoiding) geodesic from $0$ to $x$, one has $\#\pi \geq \# \mathrm{Piv}(0,x) \geq n$, so
\[
T=\tau(\pi) \geq a\#\pi \geq a \# \mathrm{Piv}(0,x) > acT.
\] 
With these observations, if we choose $c:=a^{-1}$, then
$\mathbb{P}(\{ \phi_x \geq n \} \cap (A'_n)^c)$ is equal to zero.
This means that $\mathbb{P}(\phi_x \geq n) \leq C_{10}e^{-C_9n}$, and therefore,
by taking $\alpha:=C_9/2$, $\mathbb{E} e^{\alpha \phi_x}
 \leq C_{10} \sum_{m=1}^\infty e^{-C_9m/2}<\infty$, which proves the proposition.
\end{proof}

Return to the second term of \eqref{eq: decomposition}.
For the constant $c$ chosen in the proposition above, \eqref{eq: exp_holder} implies that the second term of \eqref{eq: decomposition}
is bounded above by
\[
\alpha^{-1}(Ent~e^{\lambda T} + \mathbb{E}e^{\lambda T} \log \mathbb{E}e^{\alpha \phi_x}) \leq \alpha^{-1}(Ent~e^{\lambda T} + (\log C_7) \mathbb{E}e^{\lambda T})\ .
\]
We put this back in \eqref{eq: decomposition} along with \eqref{eq: stepone} for
\[
\mathbb{E}\left[e^{\lambda T} \#\mathrm{Piv}(0,x) \right] \leq \alpha^{-1} Ent~e^{\lambda T} + (cM\|x\|_1 + \alpha^{-1} \log C_7) \mathbb{E}e^{\lambda T}\ .
\]
Placing this together with \eqref{eq: stepone} in \eqref{eq: decomposition} shows \eqref{eq:pre_main}. To finish the proof of Theorem~\ref{thm: main_thm}, combine \eqref{eq:pre_main} with Proposition~\ref{prop: entropy_bound} for
\[
\left( 1-\frac{C_4 \lambda^2}{\alpha}\right) Ent~e^{\lambda T} \leq C_4\lambda^2 \left( cM\|x\|_1+ \frac{\log C_7}{\alpha}\right)  \mathbb{E}e^{\lambda T}\ .
\]
If $-\sqrt{\frac{\alpha}{2C_4}} < \lambda < 0$, then one obtains the bound in Theorem~\ref{thm: main_thm}.

\section{Proof of Theorem~\ref{thm:shape}}\label{sec:shape}
We first show \eqref{eq:outer}
following the strategy of the proof of Theorem~3.1 of \cite[p.48]{Ale97}. The main difference here is the exponent on the $\log$ in \eqref{eq:restrict} and that Theorem~\ref{thm:big_one} replaces Eq.~(3.7) in \cite{Ale97}.

\begin{proof}[\bf Proof of (\ref{eq:outer}) in Theorem~\ref{thm:shape}]
We start by showing that there exists a constant $C_1$ such that with probability one,
\begin{align}
 B(n) \cap \mathbb{Z}^d \subset \{ n+C_1(n\log n)^{1/2} \} B_0
 \label{eq:restrict}
\end{align}
for all large $n$.
For $C>0$ and $n \in \mathbb{N}$, let $A_n^+(C)$ be the event that
there exists $y_n \in B(n) \cap \mathbb{Z}^d$ such that
$y_n \not\in \{ n+C(n\log n)^{1/2} \} B_0$.
At first, assume that $A_n^+(C)$ occurs.
Then $\tau (0,y_n) \leq n$ and $\mathbb{E} \tau (0,y_n) \geq \mu (y_n)>n+C(n\log n)^{1/2}$, so that
\begin{align}
 \tau (0,y_n)-\mathbb{E} \tau (0,y_n) \leq n-\mu (y_n)<-C(n\log n)^{1/2}.
 \label{eq:alex_shape}
\end{align}
In the case $\mu (y_n) \leq 2n$,
since $\mu(\cdot)$ is a norm on $\mathbb{R}^d$,
\eqref{eq:alex_shape} yields a constant $C_2$ such that
\begin{align*}
 \tau (0,y_n)-\mathbb{E} \tau (0,y_n)<-CC_2(\|y_n\|_1\log \|y_n\|_1 )^{1/2}.
\end{align*}
In the case $\mu (y_n)>2n$,
by the first inequality in \eqref{eq:alex_shape},
\begin{align*}
 \tau (0,y_n)-\mathbb{E} \tau (0,y_n) \leq -\mu (y_n)/2
 &<-CC_3(\|y_n\|_1 \log \|y_n\|_1)^{1/2}
\end{align*}
for some constant $C_3$.
With these observations,
on the event $\liminf_{n \to \infty}A_n^+(C)$,
\begin{align}
 \liminf_{y \to \infty} \frac{\tau (0,y)-\mathbb{E} \tau (0,y)}{(\|y\|_1\log \|y\|_1)^{1/2}}
 \leq -CC_4
 \label{eq:contradict}
\end{align}
where $C_4:=C_2 \wedge C_3$.
On the other hand, Theorem~\ref{thm:big_one} gives $C_5$ such that for $x \in \mathbb{Z}^d$,
\begin{align*}
 \mathbb{P} \left( \tau (0,x)-\mathbb{E} \tau(0,x) \leq -\frac{CC_4}{2}(\|x\|_1\log \|x\|_1)^{1/2} \right)
 \leq \|x\|_1^{-C^2C_4^2C_5/4}.
\end{align*}
Choose $C:=(8d)^{1/2}/(C_4C_5^{1/2})$.
By Borel--Cantelli, almost surely, $\tau(0,x) -\mathbb{E} \tau(0,x) > -\frac{CC_4}{2}(\|x\|_1\log \|x\|_1)^{1/2}$ for all large $x \in \mathbb{Z}^d$.
This together with \eqref{eq:contradict} implies \eqref{eq:restrict}.

To show \eqref{eq:outer},
note that $[-1/2,1/2]^d \subset (\lceil t \rceil \log \lceil t \rceil)^{1/2}B_0$ for sufficiently large $t$.
Since $B(t)$ is increasing in $t$, \eqref{eq:restrict} implies that
with probability one, for all large $t$,
\begin{align*}
 \frac{B(t)}{t}
 \subset \frac{\lceil t \rceil}{t} \frac{B(\lceil t \rceil)}{\lceil t \rceil}
 \subset \Bigl( 1+\frac{1}{t} \Bigr)
         \left\{ 1+(C_1+1)\lceil t \rceil^{-1/2} (\log \lceil t \rceil)^{1/2} \right\}B_0,
\end{align*}
and therefore \eqref{eq:outer} follows.
\end{proof}

Next we show \eqref{eq:inner}. We will use a result of Zhang \cite[Theorem~2]{Zha10}. For any two subsets $A$ and $B$ of $\mathbb{Z}^d$, define $\tau(A,B)$ as the minimal passage time of a path from a vertex in $A$ to one in $B$. If $\mathbb{E} t_e^\alpha<\infty$ for some $\alpha>1$, then for $k \geq 1$, \cite[Theorem~2]{Zha10} gives $C_6$ and $C_7$ (which may depend on $k$) such that
for all $\ell_1$-unit vectors $\xi \in \mathbb{R}^d$,
\begin{align}\label{eq:zhang}
\begin{split}
 \mathbb{P} \bigl( \bigl| \tau (D_m(0),D_m(m\xi ))-\mathbb{E} \tau (D_m(0),D_m(m\xi )) \bigr|
 \geq m^{1/2}(\log m)^4 \bigr)
 \leq C_6m^{-k},
\end{split}
\end{align}
where $D_m(v):=v+[-C_7(\log m)^2,C_7(\log m)^2]^d$ for $v \in \mathbb{Z}^d$.

\begin{proof}[\bf Proof of (\ref{eq:inner}) in Theorem~\ref{thm:shape}]
Assume that $\mathbb{E} t_e^\alpha<\infty$ for some $\alpha> 1+1/d$. Let
\begin{align*}
 B_0(n):=n \bigl\{ 1-Cn^{-1/2}(\log n)^4 \bigr\} B_0 \cap \mathbb{Z}^d \text{ for } n \geq 1,
\end{align*}
where $C$ is a constant to be chosen later.
By the same argument as in the last paragraph of the above proof,
it suffices to prove that with probability one,
\begin{align}\label{eq:discrete}
 B_0(n) \subset B(n)
\end{align}
for all large $n \in \mathbb{N}$.
A union bound gives
\begin{align*}
 \mathbb{P} (B_0(n) \not\subset B(n) \text{ for some } n \geq 1)
 \leq \sum_{n \geq 1} \sum_{v \in B_0(n)} \mathbb{P} (\tau(0,v) >n).
\end{align*}
We would like to show this sum is finite. Then Borel--Cantelli will finish the proof.

First we show that there exists $C_8>0$ such that
\begin{equation}\label{eq: first_case}
\sum_{n \geq 1} \sum_{\|v\|_1 \leq C_8 n} \mathbb{P}(\tau(0,v) > n) < \infty\ .
\end{equation}
For this, we use \cite[Theorem~4]{Ahl}. Let $Z$ be the minimum of $2d$ i.i.d. copies of $t_e$.
\begin{lemma}[Ahlberg]\label{lem: ahlberg}
Assume that $\mathbb{E}Z^\beta<\infty$ for some $\beta>0$. For every $\epsilon>0$ and $q \geq 1$ there exists $M=M(\beta,\epsilon,q)$ such that for every $z \in \mathbb{Z}^d$ and $s \geq \|z\|_1$,
\[
\mathbb{P}(\tau(0,z) - \mu(z) > \epsilon s) \leq M \mathbb{P}(Z \geq s/M)  + \frac{M}{s^q}\ .
\]
\end{lemma}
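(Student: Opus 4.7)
The plan is to prove Lemma~\ref{lem: ahlberg} by a truncation-and-rerouting argument. Fix the cap $K := s/M$ for a constant $M = M(\beta, \epsilon, q)$ to be chosen later, set $\widehat t_e := t_e \wedge K$, and let $\widehat\tau(0,z)$ and $\widehat\mu(z)$ be the passage times and time constant of the truncated model. Since $\widehat\tau \leq \tau$ automatically, the work splits into controlling the gap $\tau - \widehat\tau$ via a heavy-edge bound and concentrating $\widehat\tau$ near $\mu$.

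For the concentration of $\widehat\tau$, since $\widehat t_e \leq K = s/M$, I would apply a bounded-increment concentration inequality (Kesten's \cite{Kes93} exponential bound, or Talagrand's convex-hull method) and combine it with the Alexander-type estimate $\mathbb{E}\widehat\tau(0,z) - \widehat\mu(z) = O(\sqrt{s \log s})$. A separate continuity-of-the-time-constant argument shows $\mu(z) - \widehat\mu(z) \leq \epsilon s/4$ once $K$ is large: the difference is bounded via a linear functional of $\mathbb{E}[t_e \mathbf{1}_{t_e > K}]$, re-expressed in terms of the $\beta$-moment of $Z$ through the trick that a capped edge can be locally replaced by a path through the $2d$ edges at one of its endpoints. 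Tuning $M$ large (depending on $q$ and $\epsilon$) ensures the resulting deviation probability is at most $M/s^q$ on the full range $s \geq \|z\|_1$, producing the polynomial term in the lemma.

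For the heavy-edge bound, take any $\widehat\tau$-geodesic $\pi$; its $\tau$-cost exceeds its $\widehat\tau$-cost by at most $\sum_{e \in \pi}(t_e - K)_+$. Rather than bounding this sum in terms of $t_e$ directly (for which we have no moment control), I would reroute $\pi$ locally at each heavy edge using the cheapest of the $2d$ edges incident to one of its endpoints, so that the excess is dominated by $\sum_v Z_v \mathbf{1}_{Z_v > K}$ summed over $O(s)$ vertices in an a priori polynomial-sized box containing $\pi$. A Markov-based union bound then yields the heavy-edge term $M \mathbb{P}(Z \geq s/M)$, and adding the two pieces completes the proof.

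The main obstacle is the legitimacy of the local rerouting: it must produce a genuine path valid for $\tau$ without double-counting edges or straying outside the control box, and the per-vertex penalty must truly be a single copy of $Z$ rather than a sum of $t_e$'s. A secondary issue is securing the a priori containment of $\pi$ in a polynomially sized box; this follows from Kesten's bound on cheap paths (Proposition~5.8 of \cite{Kes86}, already invoked in the proof of Proposition~\ref{prop: kesten_bound}).
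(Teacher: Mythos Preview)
The paper does not prove this lemma: it is quoted from Ahlberg \cite[Theorem~4]{Ahl} and used as a black box in the proof of \eqref{eq:inner}. So there is no ``paper's own proof'' to compare against; I can only comment on your sketch relative to what the stated bound actually requires.

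Your truncation-plus-rerouting plan has a genuine gap in the heavy-edge step. You propose to bound $\tau-\widehat\tau$ by $\sum_v Z_v\mathbf{1}_{\{Z_v>K\}}$ over $O(s)$ vertices in a box containing a $\widehat\tau$-geodesic, and then to extract $M\,\mathbb{P}(Z\ge s/M)$ by a ``Markov-based union bound.'' But a union bound over $O(s)$ vertices yields at best $C s\,\mathbb{P}(Z\ge s/M)$, not $M\,\mathbb{P}(Z\ge s/M)$; the extra factor of $s$ cannot be absorbed into $M$ since $M$ must be independent of $s$, and under the sole hypothesis $\mathbb{E}Z^\beta<\infty$ for arbitrary $\beta>0$ there is no moment trick that removes it. (This factor would also break the application in the paper: the summability argument after \eqref{eq: from_referee} uses exactly $\mathbb{E}Z^{2d+2+\delta}<\infty$ and would fail with an additional $\sqrt{n}$.) A secondary issue is that ``the cheapest of the $2d$ edges incident to one endpoint'' does not by itself give a path from $u$ to $v$; one needs the Cox--Durrett shell detour through one of several edge-disjoint short paths in the unit box, and it is the minimum over those paths that is comparable to $Z$.

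The way Ahlberg avoids the extra $s$ is structurally different from your outline: rather than rerouting at every heavy edge along a geodesic, he compares $\tau(0,z)$ to a regularized (shell/box) passage time that has arbitrarily fast polynomial upper-tail decay---this produces the $M/s^q$ term---and the discrepancy between $\tau(0,z)$ and the regularized time is controlled by a \emph{bounded} number of local $Z$-type variables concentrated at the endpoints $0$ and $z$. Because only $O(1)$ copies of $Z$ enter, the endpoint contribution is genuinely $M\,\mathbb{P}(Z\ge s/M)$ with no $s$ prefactor. Your concentration and time-constant-continuity ingredients for the bulk are reasonable, but the heavy-edge accounting needs to be reorganized along these lines for the argument to close.
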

We will use this lemma with $\epsilon=1$ and $q=d+2$. It will also be used later in \eqref{eq: from_referee} with $q=2d+3$. Since $\mu(\cdot)$ is a norm on $\mathbb{R}^d$,
one can take a constant $C_9 \geq 1$ satisfying
\begin{equation}\label{eq: norm_equiv}
C_9^{-1} \|v\|_1 \leq \mu(v) \leq C_9 \|v\|_1 \text{ for all } v \in \mathbb{R}^d\ .
\end{equation}
If $\|v\|_1 \leq C_9^{-1} n/2$, then $\{\tau(0,v) > n\} \subset \{\tau(0,v)-\mu(v)>n/2\}$.
Lemma~\ref{lem: ahlberg} thus implies that for all $v \in \mathbb{Z}^d$ with $\| v \|_1 \leq C_9^{-1}n/2$,
\[
\mathbb{P}(\tau(0,v) > n) \leq M \mathbb{P}(Z \geq n/(2M)) + \frac{M}{(n/2)^{d+2}}\ .
\]
By choosing $C_8 = C_9^{-1}/2$ there is a constant $C_{10}$ such that
\[
\sum_{\|v\|_1 \leq C_8 n} \mathbb{P}(\tau(0,v) > n) \leq C_{10} n^d \left[ \mathbb{P}(Z \geq n/(2M)) + \frac{1}{n^{d+2}} \right],
\]
and the left side of \eqref{eq: first_case} is bounded by $C_{10} \sum_{n \geq 1} n^d \mathbb{P}(Z \geq n/(2M)) + C_{10} \sum_{n \geq 1} n^{-2}$.
This is finite so long as $\mathbb{E}Z^{d+1}<\infty$. To see why this holds, use Markov's inequality: $\mathbb{P}(Z \geq \lambda) = \left[ \mathbb{P}(t_e \geq \lambda) \right]^{2d} \leq \frac{\left( \mathbb{E}t_e^\alpha\right)^{2d}}{\lambda^{2\alpha d}}$, so there is $C_{11}$ such that
\[
\mathbb{E}Z^\beta \leq 1 + C_{11} \int_1^\infty x^{\beta-1-2\alpha d} ~\text{d}x < \infty \text{ whenever } \beta < 2\alpha d\ .
\]
Because $\alpha>1+1/d$, we obtain
\begin{equation}\label{eq: z_condition}
\mathbb{E}Z^{2d+2+\delta} <\infty \text{ for some } \delta > 0\ ,
\end{equation}
and so \eqref{eq: first_case} holds.

We move on to show that
\begin{equation}\label{eq: second_case}
\sum_{n \geq 1} \sum_{\stackrel{\|v\|_1 > C_8 n}{v \in B_0(n)}} \mathbb{P}(\tau(0,v) > n) < \infty\ .
\end{equation}
Given a nonzero $v \in B_0(n)$, we set $\xi:=v/\|v\|_1$ and $m:=\|v\|_1$.
To shorten the notation, denote $\tau_m:=\tau (D_m(0),D_m(m\xi))$.
Note that
\begin{align}\label{eq:box}
 \tau_m
 \leq \tau(0,v) \leq \tau_m +J_m(0) + J_m(m\xi)\ ,
\end{align}
where for $w \in \mathbb{Z}^d$, $J_m(w) = \max\{\tau(z_1,z_2) : z_1,z_2 \in D_m(w)\}$.
Because $v \in B_0(n)$, $n \geq \mu (\xi )m+Cn^{1/2}(\log n)^4$, and for some $C_{12}$ independent of $v,m$, $n \geq \mu(\xi)m + C_{12} Cm^{1/2}(\log m)^4$.
The second inequality of \eqref{eq:box} and Proposition~\ref{prop:fluct} give $C_{13}$ with
\begin{align*}
 \mathbb{P} (\tau (0,v)>n)
 &\leq \mathbb{P} ( \tau_m+J_m(0)+J_m(m\xi) >\mu(\xi)m + C_{12} Cm^{1/2}(\log m)^4)\\
 &\leq \mathbb{P} ( \tau_m+J_m(0)+J_m(m\xi)\\
 &\qquad \quad >\mathbb{E} \tau(0,v) -C_{13}(m \log m)^{1/2}+C_{12} Cm^{1/2}(\log m)^4).
\end{align*}
Due to the first inequality of \eqref{eq:box}, this is bounded by
\begin{align}\label{eq:lower_rate}
\begin{split}
 &\mathbb{P} ( \tau_m -\mathbb{E} \tau_m +J_m(0)+J_m(m\xi)
       > -C_{13}(m \log m)^{1/2}+C_{12} Cm^{1/2}(\log m)^4)\\
 &\leq \mathbb{P} \bigl( \tau_m-\mathbb{E} \tau_m
          >(C_{12} C/2)m^{1/2}(\log m)^4- C_{13} (m\log m)^{1/2} \bigr)\\
 &\quad
       +\mathbb{P} \bigl( J_m(0) + J_m(m\xi) >(C_{12} C/2)m^{1/2}(\log m)^4 \bigr).
\end{split}
\end{align}
For $C\geq 2C_{13} /(C_{12} (\log 2)^{7/2})+2$, by \eqref{eq:zhang} the second to last term is bounded above by
\begin{equation}\label{eq: ingredient_1}
\max_{\stackrel{v \in B_0(n)}{\|v\|_1 >C_8 n}} \mathbb{P}(\tau_m - \mathbb{E}\tau_m > (C_{12}C/2)m^{1/2}(\log m)^4 - C_{13} (m\log m)^{1/2}) \leq C_{14} n^{-(d+2)}\ .
\end{equation}
for some constant $C_{14}$.
As for the other term, write $\lambda_m = (C_{12}C/2)m^{1/2}(\log m)^4$ and estimate using subadditivity
\begin{align*}
\mathbb{P}(J_m(0)+J_m(m\xi) > \lambda_m) &\leq 4 \sum_{w \in D_m(0)} \mathbb{P}(\tau(0,w) \geq \lambda_m/4) \\
&\leq 4 \# D_m(0) \max_{w \in D_m(0)} \mathbb{P}(\tau(0,w) \geq \lambda_m/4)\ .
\end{align*}
We apply Lemma~\ref{lem: ahlberg} with $\epsilon=1$ and $q=2d+3$. If $C$ is large enough, then $\lambda_m/4 \geq 2 \mu(w)$ for all $w \in D_m(0)$. This gives $\mathbb{P}(\tau(0,w) \geq \lambda_m/4) \leq \mathbb{P}(\tau(0,w) - \mu(w) \geq \lambda_m/8)$. Again increasing $C$, so that $\lambda_m/8 \geq \max_{w \in D_m(0)} \|w\|_1$ for all $m$, use Lemma~\ref{lem: ahlberg} with $s = \frac{\lambda_m}{8} \geq \|w\|_1$ for
\begin{equation}\label{eq: from_referee}
\mathbb{P}(\tau(0,w) \geq \lambda_m/4) \leq M\mathbb{P}(Z \geq \lambda_m/(8M)) + \frac{M}{(\lambda_m/8)^{2d+3}}\ .
\end{equation}
This shows that there are constants $C_{15}$ and $C_{16}$ such that
\begin{align*}
\max_{\stackrel{v \in B_0(n)}{\|v\|_1 > C_8 n}} \mathbb{P}(J_m(0) + J_m(m\xi) > \lambda_m)
&\leq C_{15} n^{\delta/2}\left[ \mathbb{P}(Z \geq C_{16} \sqrt n) + n^{-(d+3/2)}\right]\ ,
\end{align*}
where $\delta \in (0,1)$ is from \eqref{eq: z_condition} is used to bound $\# D_m(0)$. Combining this with \eqref{eq: ingredient_1} and placing them into \eqref{eq:lower_rate} implies
\[
\sum_{n \geq 1} \sum_{\stackrel{\|v\|_1>C_8 n}{v \in B_0(n)}} \mathbb{P}(\tau(0,v)>n) \leq \sum_{n \geq 1} C_{17} n^{d+\delta/2} \left[ \mathbb{P}(Z \geq C_{16} \sqrt n) + n^{-(d+3/2)} \right]
\]
for some constant $C_{17}$.
Because $\delta<1$, this sum converges if and only if $\sum_{n \geq 1} n^{d+\delta/2} \mathbb{P}(Z \geq C_{16} \sqrt n)$ does, which is the same as $\sum_{n \geq 1} n^{d+\delta/2} \mathbb{P}(Z^2 \geq C_{16}^2 n) < \infty$. Because $\mathbb{E}Z^{2d+2+\delta}<\infty$ from \eqref{eq: z_condition}, we are done.
\end{proof}



\bigskip
\noindent
{\bf Acknowledgments.}

We thank an anonymous reviewer for pointing out Talagrand's lower tail concentration inequality under two moments. The research of M. D. is supported by NSF grant DMS-1419230.


%
%
%
%

\end{document}